\documentclass{amsart}
\usepackage{amsfonts,amssymb,amsmath,amsthm}
\usepackage{url}
\usepackage{enumerate}

\urlstyle{sf}
\newtheorem{thrm}{Theorem}[section]
\newtheorem{lem}[thrm]{Lemma}

\theoremstyle{definition}

\newtheorem{remark}[thrm]{Remark}

\numberwithin{equation}{section}

\def\sqr#1#2{{\,\vcenter{\vbox{\hrule height.#2pt\hbox{\vrule width.#2pt
height#1pt \kern#1pt\vrule width.#2pt}\hrule height.#2pt}}\,}}
\def\bo{\sqr44\,}

\newcommand{\vp}{\varphi}

\author{Cho-Ho Chu and Lei Li}
\address{
School of Mathematical Sciences,
Queen Mary, University of London,
London E1 4NS, UK} \email{c.chu@qmul.ac.uk}
\address{School of Mathematical Sciences and LPMC,
  Nankai University, Tianjin 300071,
 China}
\email{leilee@nankai.edu.cn}

\thanks{The second author is supported by NSF (China) research grants No.11301285}

\keywords{Separably injective Banach space.  $C_\sigma$-space. Substonean space.
} \subjclass{Primary 46B20, 17C65, 54G05}

\begin{document}

\title[Separably injective $C_\sigma$-spaces]
 {Separably injective $C_\sigma$-spaces}
\begin{abstract}
We show  that a (complex) $C_\sigma$-space is separably
injective if and only if it is linearly isometric to the Banach
space $C_0(\Omega)$ of complex continuous functions vanishing at
infinity on a substonean locally compact Hausdorff space $\Omega$.
\end{abstract}

\maketitle

\section{Introduction}\label{sect1}

Recently, separably injective Banach spaces have been studied in depth by  Avil\'es,  Cabello S\'anchez,  Castillo, Gonz\'alez
 and Moreno in \cite{AC13,av,AC16}, where one can find a number of interesting examples of these spaces despite
  the scarcity of examples of injective Banach spaces.
In contrast to the fact that $1$-injective Banach spaces
 are isometric to the Banach space $C(\Omega)$ of continuous functions on a compact
Hausdorff space $\Omega$ \cite{H58,kelley}, $1$-separably injective Banach spaces
need not be complemented in $C(\Omega)$. In view of this, a  natural question arises: when is a $1$-complemented subspace of $C(\Omega)$ separably injective? We address this question in this paper.

A Banach space $V$ is {\it $1$-separably injective} if every continuous
linear map\\  $T: Y \longrightarrow V$ on a closed subspace $Y$ of  a {\it separable} Banach space $Z$
admits a norm preserving
extension to $Z$.
It is known that $C(\Omega)$ itself is separably injective if and only
if $\Omega$ is an $F$-space \cite{AC13}. Abelian C*-algebras with identity are of the form $C(\Omega)$ for some $\Omega$.
The ones without identity can be represented as the algebra $C_0(S)$ of complex continuous functions vanishing at infinity on a locally compact Hausdorff space $S$ and  it has  been shown lately  that $C_0(S)$ is separably injective if and only if $S$ is substonean \cite{CL16}. Following
\cite{GP}, we call $S$   \textit{substonean} if any two
disjoint open $\sigma$-compact subsets of $S$ have disjoint
compact closures. The compact substonean spaces are exactly the \textit{F-spaces}
defined in \cite{GJ,s}. However, infinite discrete spaces are
F-spaces without being substonean. We refer to \cite[Example
5]{HW89} for an example of a substonean space which is not an
F-space.

Noting that the class of $1$-complemented subspaces of $C(\Omega)$ is identical to that of $C_\sigma$-spaces
\cite[Theorem 3]{lw} (see also Remark \ref{cs}), our question amounts to asking for a characterisation of separably injective $C_\sigma$-spaces.
We give a complete answer by showing that a $C_\sigma$-space is separably
injective if and only if it is linearly isometric to the function
space $C_0(S)$  on a substonean locally compact Hausdorff space $S$.

In what follows, all Banach spaces are over the complex field and we will  denote by $C_0(K)$ the C*-algebra of complex continuous functions vanishing at
infinity on a locally compact Hausdorff space $K$. If $K$ is compact, we omit the subscript $0$.
Given a function $g \in C_0(K)$, we denote by $\overline g$  the complex conjugate of $g$.

Let $\mathbb{T}=\{\alpha\in \mathbb{C}: |\alpha|=1\}$ be the
circle group. By a $\mathbb{T}$-space, we mean a locally compact
Hausdorff space $K$ equipped with a continuous group action
$$\sigma: (\alpha, \omega) \in \mathbb{T} \times K  \mapsto \alpha \cdot \omega
\in K.$$
A complex Banach space is called a {\it complex
$C_\sigma$-space} if it is linearly isometric to a function space
of the form $C_\sigma(K)$ for some $\mathbb{T}$-space $K$, defined
by
$$C_\sigma(K) = \{f\in C_0(K): f(\sigma(\alpha,\omega))= \alpha f(\omega),
\forall \omega \in K\}.$$
We note that the definition of a complex $C_\sigma$-space in  \cite{olsen} requires
$K$ to be compact.

To achieve our result, we make substantial use of the Jordan algebraic structure
of $C_\sigma(K)$. Indeed, although $C_\sigma(K)$ lacks a C*-algebraic structure, it is equipped
with a triple product
$$\{f,g,h\} = f\overline g h \qquad (f,g, h \in C_\sigma(K))$$
which turns it into a {\it JB*-triple} with many useful Jordan properties.

First, let us give a brief introduction to JB*-triples which generalise  C*-algebras.
For further references and the geometric origin of JB*-triples, we refer to \cite{book,ru,u}. A complex Banach space $V$ is a {\it JB*-triple} if it admits a continuous triple product
$$\{\cdot,\cdot,\cdot\} : V^3 \longrightarrow V$$
which is symmetric and linear in the
outer variables, but conjugate linear in the middle variable, and
satisfies
\begin{enumerate}
\item[(i)] $\{x,y,\{a,b,c\}\}=\{\{x,y,a\},b,c\}-\{ a,\{y,x,b\},c\}
+\{a,b,\{x,y,c\}\}$; \item[(ii)]  the operator $a\bo a : x\in V \mapsto \{a,a, x\} \in V$ has real numerical range and
non-negative spectrum; \item[(iv)] $\|a\bo a\|=\|a\|^2$
\end{enumerate}
for $a,b,c,x,y \in V$. We always have $\|\{a,b,c\}\| \leq \|a\| \|b\| \|c\|$ and (i)
is called the {\it Jordan triple identity}.
A C*-algebra $A$ is a JB*-triple with the triple product
$$\{a,b,c\} = \frac{1}{2}(ab^*c +cb^*a) \qquad (a,b,c \in A).$$
More generally, the range of a contractive projection on a C*-algebra is a JB*-triple
(cf.\,\cite[Theorem 3.3.1]{book}), but not always a C*-algebra.
An element $e$ in a JB*-triple is called a {\it tripotent} if $e = \{e,e,e\}$. Tripotents in
C*-algebras are exactly the partial isometries.

A subspace $W$ of a JB*-triple $V$ is called a {\it
subtriple} if $a,b,c \in W$ implies $\{a,b,c\} \in W$. Closed subtriples of a JB*-triple
are JB*-triples in the inherited norm and triple product. A {\it
triple ideal} of $V$ is a subspace $J\subset V$ such that
$\{a,b,c\}\in J$ whenever one of $a$, $b$ and $c$ belongs to $J$.
Given a closed triple ideal $J\subset V$, the quotient space $V/J$ is a JB*-triple
in the triple product
$$\{a+J, b+J, c+J\} := \{a,b,c\} + J.$$ Two elements $a,b \in V$ are said to be {\it orthogonal}
to each other if $a\bo b =0$, where $a\bo b$ is the continuous linear
 map $x\in V \mapsto \{a,b,x\}\in V$. Two subspaces $I, J \subset V$ are {\it orthogonal} if
$I\bo J := \{a\bo b: a\in I, b\in J\} = \{0\}$.

The bidual $V^{**}$ of
a JB*-triple $V$ carries a natural structure of a JB*-triple, with a unique predual, in which
the triple product is separately weak* continuous and the natural
embedding of $V$ into $V^{**}$ identifies $V$ as a subtriple of
$V^{**}$. Given a closed triple ideal $I \subset V$, the bidual $I^{**}$ embeds as a weak* closed
triple ideal in $V^{**}$, which can be decomposed into an $\ell^\infty$-sum $V^{**} = I^{**} \oplus_\infty J$ for some weak* closed triple ideal $J\subset V^{**}$,  orthogonal to $I^{**}$ \cite[lemma 3.3.16]{book}.

A linear map $\varphi: V \longrightarrow W$ between two JB*-triples is called
a {\it triple homomorphism} if $\{\varphi(a), \varphi(b),\varphi(c)\} =
\varphi\{a,b,c\}$ for $a,b, c \in V$. The triple isomorphisms between $V$ and $W$ are
exactly the surjective linear isometries (cf.\,\cite[Theorem 3.1.7, Theorem 3.1.20]{book}).

 A JB*-triple $V$ is called {\it abelian} if
its triple product satisfies
$$\{\{x,y,z\},u,v\} =\{x,\{y,z,u\},v\} =  \{x,y,\{z,u,v\}\}$$
for all $x,y,z,u, v \in V$.  An abelian C*-algebra is an abelian JB*-triple
and so is $C_\sigma(K)$ in the triple product defined above. In fact, $C_\sigma(K)$
is a closed subtriple of $C_0(K)$.

By \cite[Lemma 2.2, Theorem 3.7]{bc}, an
abelian closed subtriple $V$ of a C*-algebra admits a composition series
$(J_\lambda)_{0\leq \lambda\leq \mu}$ of closed triple ideals,  indexed by ordinals $\lambda$, such that
the quotient $J_{\lambda+1}/J_{\lambda}$ is linearly isometric to an
abelian C*-algebra, for  $\lambda< \mu$. We recall that  $(J_\lambda)_{0\leq\lambda\leq\mu}$
 is called a {\it composition series} if $J_0=\{0\}, J_\mu = V$ and for a limit ordinal $\lambda \leq \mu$, the ideal $J_\lambda$ is the closure
of $\bigcup_{\lambda' <\lambda} J_{\lambda'}$.

\section{Jordan structure in $C_\sigma$-spaces}\label{j}

We will make use of the abelian JB*-triple structure of $C_\sigma$-spaces to derive our result.
To pave the way, we first present some detailed analysis of this structure.

Let $V$ be an abelian closed subtriple of a C*-algebra (e.g. a $C_\sigma$-space)
in this section.  One
can consider it as  as a subtriple
 of  its bidual $V^{**}$ via the natural embedding
 $v\in V \mapsto \widehat v \in V^{**}$, where $\widehat v (\psi)
 = \psi (v)$ for $\psi \in V^*$. By \cite{bc, FR83}, $V^{**}$
is (isometric to and identified as) an {\it abelian} von Neumann
algebra with identity denoted by $\mathbf{1}$ and involution $z\in
V^{**} \mapsto z^* \in V^{**}$. The triple product in $V^{**}$ is
given by $\{a,b,c\} = ab^*c$. Each $\psi \in V^*$ can be viewed
naturally as a functional of $V^{**}$. If $\psi$ is a positive functional of
$V^{**}$, then $\psi (z^*) = \overline{\psi(z)}$ for each $z\in
V^{**}$. A positive functional $\psi\in V^*$ is called a {\it normal state} of $V^{**}$
if $\psi(\mathbf{1})=1$. It is called {\it pure} if it is an extreme point
of the norm closed convex set of normal states in $V^*$.

 Let $S$ be the set  of all pure normal
states of $V^{**}$, which are exactly the multiplicative normal
states of $V^{**}$. Given a projection $p\in V^{**}$ and $s\in S$,
we have $s(p)= 0$ or $1$ since $s(p)=s(p^2) = s(p)^2$. If $u\in
V^{**}$ is unitary, then $1=s(\mathbf{1})=s(u^*u) = |s(u)|^2$ for
all $s\in S$. We equip $S$ with the weak* topology of $V^*$ and
call it the {\it pure normal state space} of $V^{**}$.

The nonzero triple homomorphisms from $V$ to $\mathbb{C}$ are
exactly the set $K= {\rm ext}\, V^*_1$ of extreme points of the
dual unit ball $V^*_1$, where $K\cup \{0\}$ is weak*-compact
\cite[Proposition 2.3, Corollary 2.4]{FR83} and $S \subset K$.

For
each $\omega \in K$ and tripotent $c\in V^{**}$, we have
$$\omega(c)=\omega(cc^*c)=\omega(c)|\omega(c)|^2$$
which implies $\omega(c)=0$ or $\omega(c) \in \mathbb{T}$. We note
that $K$ is a $\mathbb{T}$-space with the natural
$\mathbb{T}$-action
$$\sigma: (\alpha,\omega) \in \mathbb{T}\times K \mapsto \alpha \omega \in
K$$ and we have $ K=\{\alpha s: \alpha \in \mathbb{T}, s\in S\}$.
In fact, each $\omega \in K$ has a {\it unique} representation
$\omega = \alpha s$ for some $\alpha \in \mathbb{T}$ and $s\in S$,
where $s= \overline{\omega(\mathbf{1})}\omega$. By \cite[Theorem
1]{FR83}, the map
\begin{equation}\label{id}
v\in V \mapsto \widehat v|_K \in C_\sigma(K)
\end{equation}
 is a surjective linear isometry, which enables us to identify $V$
 with the $C_\sigma$-space $C_\sigma (K)$.

\begin{remark} \label{cs} Let $\pi: C(\Omega) \longrightarrow C(\Omega)$ be a contractive projection.
Then its image $\pi(C(\Omega))$ is an abelian closed subtriple in some C*-algebra \cite{fr}
and hence the previous discussion implies
that it is a $C_\sigma$-space.
\end{remark}

For each $a\in V\backslash \{0\}$, let $V(a)$ be the JB*-subtriple generated by $a$ in $V$.
Then there is a surjective linear isometry and triple isomorphism
\begin{equation}\label{em}
\phi: C_0(S_a) \rightarrow V(a) \subset V
\end{equation}
which identifies $V(a)$
with the abelian JB*-triple
$C_0(S_a)$ of continuous functions vanishing at infinity on the
triple spectrum $S_a \subset (0, \|a\|]$, where $S_a \cup \{0\}$
is compact \cite[Theorem 3.1.12]{book}.

Let $\phi^{**}: C_0(S_a)^{**} \rightarrow V(a)^{**}$ be the
bidual map and let
$\frak{i}$ be the identity in the von Neumann algebra $C_0(S_a)^{**}$. Then
$e = \phi^{**}(\frak i)$ is the identity in the von Neumann algebra $V(a)^{**}$ with
 product and involution given by
 $$x \cdot y =
\{x,e,y\}, \quad  x \in V(a)^{**} \mapsto \{e,x,e\} \in V(a)^{**}.$$
While this abelian von Neumann algebraic
structure of $V(a)^{**}$ will be assumed throughout, it
should be noted that $V(a)^{**}$ need not be a subalgebra of
$V^{**}$ in its natural embedding. Nevertheless, $V(a)^{**}$ can
always be considered as a subtriple of $V^{**}$ and the identity
$e\in V(a)^{**}$ is a tripotent in $V^{**}$ satisfying
\begin{equation}\label{eae}
\{e, a,e\} = \{\phi^{**}(\frak i), \phi^{**}(\iota_a), \phi^{**}(\frak i)\} = \phi^{**}\{\frak i, \iota_a, \frak i\}
= a.
\end{equation}
For each $\rho\in K$, viewed as a complex-valued triple homomorphism on $V(a)^{**}$,
we have $\rho(e) =0$ if and only if $\rho(a)=\rho\{e,e,a\}=0$.

The norm closed triple ideal $J_a$ generated by $a$ in $V$ contains $V(a)$ and is the norm closure of $\{a, V,a\}$.
It has been shown in \cite[Lemma 2.2]{bc} that $J_a$ is  linearly isometric to the abelian C*-algebra $C_0(X_a)$ of continuous functions vanishing
at infinity  on
a locally compact Hausdorff space $X_a$. We need some detail here for later
application. In fact, $J_a$ is an abelian C*-algebra with the same product and involution of $V(a)^{**}$,
and $e\in V(a)^{**} \subset J_a^{**}$ is the identity of $J_a^{**}$, which can be seen from the following
computation using (\ref{eae}).
\[\{\{a,x,a\},e,\{a,y,a\}\} = \{a, \{x, \{a,e,a\}, y\},a\} \in \{a, V, a\};\]
\[ \{e, \{a,x,a\},e\} = ea^*xa^*e = ee^*ae^*xe^*ae^*e=ae^*xe^*a
 = \{x,a,a\} \in J_a. \]

Given $\rho \in K = {\rm ext}\,V^*_1$ with $\rho(e)=1$, its restriction $\rho|_{J_a} \in J_a^*$
 is a pure normal state of $J_a^{**}$. Conversely,
 each pure normal state $\vp$ of
$J_a^{**}$ is an extreme point of
the closed unit ball of $J_a^*$ and can be extended to an extreme
point $\widetilde \vp \in {\rm ext}\, V^*_1$ satisfying  $\widetilde \vp(e)=1$.
Let
\[X_a=\{\rho|_{J_a}: \rho \in K, \rho(e)=1\}\]
denote the pure normal state space of $J_a^{**}$ which is locally compact in the weak* topology $w(J_a^*,J_a)$ of $J_a^*$.

We note that for each $\rho \in K$, we have $\rho(a)=0$ if and only if $\rho(V(a))=\{0\}$, which in turn is equivalent
to $\rho(J_a)= \rho(\overline{\{a,V,a\}})=\{0\}$.

\begin{lem}\label{ja} In the above notation, the set $K(e)=\{\rho\in K: \rho(e)=1\}$ with the relative weak* topology
of $V^*$ is homeomorphic to $X_a=\{\rho|_{J_a}: \rho \in K, \rho(e)=1\}$ in the  topology  $w(J_a^*,J_a)$.
In particular, $K(e)$ is weak* locally compact in $K$.
\end{lem}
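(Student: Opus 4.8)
The plan is to prove that the restriction map
$$R:\ \rho\in K(e)\ \longmapsto\ \rho|_{J_a}\in X_a$$
is a homeomorphism, from which the final assertion follows at once, since $X_a$ is locally compact in $w(J_a^*,J_a)$ and homeomorphic spaces share local compactness. By the discussion preceding the lemma, $R$ is well defined, because each $\rho|_{J_a}$ with $\rho(e)=1$ is a pure normal state of $J_a^{**}$ and hence lies in $X_a$; it is surjective by the very definition of $X_a$ as this set of restrictions. Continuity of $R$ is immediate: if $\rho_i\to\rho$ in $w(V^*,V)$ then $\rho_i(y)\to\rho(y)$ for every $y\in J_a\subset V$, which is exactly $R(\rho_i)\to R(\rho)$ in $w(J_a^*,J_a)$. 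The substance therefore lies in the injectivity of $R$ and the continuity of $R^{-1}$.

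For injectivity I would exploit the tripotent $e$ to recover $\rho$ from its restriction to $J_a$. Viewing $\rho\in K$ as a triple homomorphism on $V^{**}$, the relation $\rho\{e,e,x\}=|\rho(e)|^2\rho(x)=\rho(x)$ holds for every $x\in V$ once $\rho(e)=1$; moreover $\{e,e,x\}\in J_a^{**}$, since $J_a^{**}$ is a weak* closed triple ideal of $V^{**}$ containing $e$. As $\rho\in V^*=(V^{**})_*$ is normal, its restriction to the weak* closed subspace $J_a^{**}$ is the unique normal extension of $\rho|_{J_a}$ (the space $J_a$ being weak* dense in $J_a^{**}$). Hence $\rho(x)=\rho|_{J_a^{**}}(\{e,e,x\})$ is completely determined by $\rho|_{J_a}$, so two elements of $K(e)$ agreeing on $J_a$ coincide.

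The main obstacle is the continuity of $R^{-1}$, because the condition $\rho(e)=1$ involves $e\in V^{**}\setminus V$ and so is not $w(V^*,V)$-continuous; indeed $K(e)$ need not be weak* closed in $K$. I would circumvent this using the weak* compactness of $K\cup\{0\}$. Given a net $\varphi_i\to\varphi$ in $X_a$, set $\rho_i=R^{-1}(\varphi_i)$ and $\rho=R^{-1}(\varphi)$, and let $\rho_{i_j}\to\psi$ be any weak* convergent subnet in $K\cup\{0\}$. Testing against $y\in J_a$ gives $\psi|_{J_a}=\varphi$; as $\varphi$ is a state, it is nonzero, forcing $\psi\in K$. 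Normality of $\psi$ makes $\psi|_{J_a^{**}}$ agree with the normal extension of $\varphi$, so $\psi(e)=\varphi(e)=1$ and $\psi\in K(e)$; the injectivity just established then yields $\psi=\rho$. Thus every weak* cluster point of $(\rho_i)$ equals $\rho$, and compactness of $K\cup\{0\}$ gives $\rho_i\to\rho$ in $w(V^*,V)$. Consequently $R^{-1}$ is continuous, $R$ is a homeomorphism, and $K(e)$ is weak* locally compact in $K$.
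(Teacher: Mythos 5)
Your proof is correct, and its skeleton --- show that the restriction map $\rho\mapsto\rho|_{J_a}$ is a homeomorphism, with continuity and surjectivity immediate --- is exactly the paper's. But you handle the two substantive steps by genuinely different means. For injectivity the paper never leaves $V$: since $\rho(e)=1$ forces $\rho(a)\neq 0$ and $\{a,a,v\}\in J_a$ for every $v\in V$, the identity $|\rho(a)|^2\rho(v)=\rho\{a,a,v\}$ recovers $\rho$ from $\rho|_{J_a}$ in one line; you instead pass to the bidual, using $\{e,e,x\}\in J_a^{**}$ together with uniqueness of the normal extension from the weak* dense $J_a$ to $J_a^{**}$. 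For continuity of the inverse the paper reuses the same formula --- $\rho_\gamma(a)\to\rho(a)\neq 0$ and $|\rho_\gamma(a)|^2\rho_\gamma(v)=\rho_\gamma\{a,a,v\}\to\rho\{a,a,v\}=|\rho(a)|^2\rho(v)$ yield $\rho_\gamma(v)\to\rho(v)$ for all $v$ --- whereas you invoke weak* compactness of $K\cup\{0\}$ and show every cluster point of the lifted net is the expected limit. Both routes are sound; the paper's single identity $\rho(v)=\rho\{a,a,v\}/|\rho(a)|^2$ does all the work at once and needs no appeal to normality, bidual embeddings, or compactness, while your argument is more structural and would carry over to situations where one does not have at hand an element $a\in J_a$ with $\{a,a,V\}\subset J_a$ and $\rho(a)\neq 0$ on $K(e)$. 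Your reliance on viewing $\rho\in K$ as a normal triple homomorphism on $V^{**}$ is legitimate here, since the paper itself uses this identification (e.g.\ in computing $\omega(c)$ for tripotents $c\in V^{**}$).
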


\begin{proof} We show that the restriction map $\rho \in K(e) \mapsto \rho|_{J_a} \in X_a$ is a homeomorphism in these topologies.
It is clearly continuous and surjective. Given $\rho, \rho'\in K$ such that $\rho|_{J_a} = \rho'|_{J_a}$, then
$\rho(a)=\rho'(a)\neq 0$ since $\rho(e)=\rho'(e)=1$. For any $v\in V$, we have $\{a,a,v\} \in J_a$ and hence $|\rho (a)|^2\rho(v) =
\rho\{a,a,v\}=\rho'\{a,a,v\} = |\rho'(a)|^2\rho'(v)$, giving $\rho(v)=\rho'(v)$. This proves injectivity of the map.

Finally, to show that the inverse of the map is continuous, let $(\rho_\gamma|_{J_a})$ be a net
converging to $\rho|_{J_a} \in X_a$. Then
again, for each $v\in V$, we have $\rho_\gamma (a) \rightarrow \rho(a) \neq 0$ and $|\rho_\gamma (a)|^2\rho_\gamma(v) =
\rho_\gamma \{a,a,v\} \rightarrow\rho\{a,a,v\} = |\rho(a)|^2\rho(v)$, which implies $\rho_\gamma(v) \rightarrow \rho(v)$,
proving continuity.
\end{proof}

\begin{remark}\label{xake}
The above lemma enables us to identify the pure normal state space $X_a$ with $K(e)$ and write
$X_a=\{\rho\in K: \rho(e)=1\}$.
\end{remark}

We retain the above notation in the sequel.

\section{Separably injective $C_\sigma$-spaces}

We characterize separably injective $C_\sigma$-spaces  in this section. Throughout, let $V$
be a $C_\sigma$-space.
We will identify $V$, as in the previous section, with
the $C_\sigma$-space $C_\sigma(K)$, where $K= {\rm ext}\, V^*_1$ is the set of nonzero triple homomorphisms
from $V$ to $\mathbb{C}$.

\begin{lem} \label{2}
Let $V$ be separably injective. Given $a\in V$ of unit norm and the identity
$e\in V(a)^{**}$, let $K(e) =\{\rho\in K: \rho(e)=1\}$.
Then there exists an element $v_a \in V$ such that $K(e) \subset K_a$
where
$$K_a = \{ \rho \in K: \rho (v_a)=1\}$$
and  $K_a$ is weak* compact in $V^*$.
\end{lem}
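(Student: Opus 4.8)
The plan is to reduce the construction of $v_a$ to an extension problem for the separable pair $c_0 \subset c$ of convergent sequence spaces, and then to read off the required value $1$ from the isometric nature of the extension. Observe first that the compactness assertion is the easy part: once a suitable $v_a \in V = C_\sigma(K) \subset C_0(K)$ has been found, the functional $\psi \mapsto \psi(v_a)$ is weak* continuous, so $K_a = (K \cup \{0\}) \cap \{\psi \in V^* : \psi(v_a) = 1\}$ is a weak*-closed subset of the weak*-compact set $K \cup \{0\}$ (the point $0$ is excluded since $0(v_a) = 0 \neq 1$), hence weak* compact. Thus the entire content of the lemma is to produce $v_a \in V$ with $\rho(v_a) = 1$ for every $\rho \in K(e)$. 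The difficulty is that, by Lemma \ref{ja} and Remark \ref{xake}, $K(e) = X_a$ is the pure normal state space of $J_a \cong C_0(X_a)$, which is merely locally compact; so $J_a$ has no unit and the constant function $1$ need not lie in it. Separable injectivity is exactly what will let us manufacture such a ``unit at infinity''.

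For the construction I would fix the approximate units $g_n(t) = \min(nt,1) \in C_0(S_a)$ and set $b_n = \phi(g_n) \in V(a)$. Since $a$ has unit norm, $S_a \subset (0,1]$, and the differences $w_n(t) = g_n(t) - g_{n-1}(t)$ (with $g_0 = 0$) are nonnegative with $\sum_{n\ge 1} w_n(t) = 1$ for every $t \in S_a$, i.e. they form a telescoping partition of unity. This makes the map
$$T: \xi = (\xi_n) \in c_0 \longmapsto \phi\Big(\sum_{n\ge 1}\xi_n(g_n - g_{n-1})\Big) \in V(a) \subset V$$
well defined and contractive: the series converges uniformly because $\big\|\sum_{n>N}\xi_n(g_n-g_{n-1})\big\|_\infty \le \sup_{n>N}|\xi_n| \to 0$, its sum lies in $C_0(S_a)$, and $\|T\xi\| \le \|\xi\|_\infty$. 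Using the separable injectivity of $V$ (the space $c$ is separable), I extend $T$ to a norm-preserving $\widetilde T: c \to V$ and set $v_a = \widetilde T(\mathbf 1)$, where $\mathbf 1 = (1,1,\dots) \in c$.

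To verify $\rho(v_a) = 1$ for $\rho \in K(e)$, note that $\rho \circ \phi$ is a nonzero triple homomorphism on $C_0(S_a)$, hence evaluation at some $t_\rho \in S_a$ up to a unimodular scalar; the condition $\rho(e) = 1$ with $e = \phi^{**}(\frak i)$ forces that scalar to be $1$, so $\rho(\phi(f)) = f(t_\rho)$. Consequently $\rho \circ T$ is the functional $\xi \mapsto \sum_{n\ge1} w_n(t_\rho)\xi_n$ on $c_0$, of $\ell^1$-norm $\sum_n w_n(t_\rho) = 1$. Now $\rho \circ \widetilde T \in c^*$ extends $\rho \circ T$ and satisfies $\|\rho \circ \widetilde T\| \le \|\rho\|\,\|\widetilde T\| \le 1$. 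Writing $c^* \cong \ell^1(\{0,1,2,\dots\})$, so that $\psi(\xi) = a_0 \lim_n \xi_n + \sum_{n\ge1} a_n \xi_n$ with $\|\psi\| = \sum_{n\ge 0}|a_n|$, the extension must have $a_n = w_n(t_\rho)$ for $n \ge 1$; since $\sum_{n\ge1}|a_n| = 1$ already, the bound $|a_0| + 1 \le 1$ forces $a_0 = 0$. Therefore $\rho(v_a) = (\rho\circ\widetilde T)(\mathbf 1) = \sum_{n\ge1} w_n(t_\rho) = 1$, giving $K(e) \subseteq K_a$ as required.

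The main obstacle is the correct use of separable injectivity in this last step: everything hinges on the fact that each functional $\rho \circ T$ \emph{already} attains its norm on $c_0$ (because the weights $w_n(t_\rho)$ are nonnegative and sum to one), so that any norm-preserving --- indeed any norm-$\le 1$ --- extension to $c$ is forced to annihilate the limit functional and hence to take the value $1$ on the constant sequence. The subsidiary technical checks are the uniform convergence defining $T$ and the identification of $\rho \circ \phi$ with point evaluation via $\rho(e)=1$; both are routine given the material developed in Section \ref{j}.
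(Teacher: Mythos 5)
Your proof is correct, and while it shares the paper's overall blueprint---use separable injectivity to extend a contraction across a one-dimensional enlargement obtained by adjoining a ``unit'', set $v_a$ equal to the image of that unit, and then exploit norm preservation to force $\rho(v_a)=1$---the implementation of both halves is genuinely different. The paper extends the embedding $\phi\colon C_0(S_a)\to V$ itself over the separable pair $C_0(S_a)\subset C_0(S_a)+\mathbb{C}\chi_a\subset\ell^\infty(S_a)$, and pins down $\rho(\Phi(\chi_a))=1$ by a disc-intersection argument: the odd roots $r_n$ of $\iota_a$ satisfy $\|\chi_a-2r_n\|\le 1$ and $\rho(\phi(r_n))\to 1$, so $\rho(\Phi(\chi_a))$ lies in the closed unit discs about both $0$ and $2$, hence equals $1$. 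You instead precompose with a partition-of-unity map $T\colon c_0\to C_0(S_a)\cong V(a)$, extend over $c_0\subset c$, and read off the value from the $\ell^1$-description of $c^*$: since $\rho\circ T$ already attains norm $1$ on $c_0$ (nonnegative weights summing to $1$), any contractive extension to $c$ must annihilate the limit functional. Both are legitimate ``norm rigidity'' mechanisms; yours works with the completely concrete pair $c_0\subset c$ (so separability is immediate and the dual-space computation is elementary), while the paper's is shorter because it extends $\phi$ directly and avoids the detour through a sequence space. The supporting steps you flag as routine---uniform convergence of $\sum_n\xi_n(g_n-g_{n-1})$, the identification of $\rho\circ\phi$ with evaluation at some $t_\rho\in S_a$ forced by $\rho(e)=1$, and the weak* compactness of $K_a$ as a weak* closed subset of $K\cup\{0\}$ not containing $0$---all check out against the material in Section \ref{j}. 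Note only that, like the paper, your argument genuinely requires the extension to be norm-preserving (i.e.\ $1$-separable injectivity): a bound $\|\widetilde T\|\le\lambda>1$ would leave room for a nonzero coefficient on the limit functional.
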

\begin{proof}
Since $K_a$
is weak* closed in $K \cup \{0\}$, it is weak* compact. Let
$$\phi: C_0(S_a) \rightarrow V(a) \subset V$$
be the embedding in (\ref{em}), where the triple spectrum $S_a\subset (0,1]$ can be identified, via
the evaluation map as usual, with the pure normal state space of $C_0(S_a)^{**}$.

Let $\chi_a$ be the constant function on $S_a$ with value $1$ 
 and consider the separable subspace
$C_0(S_a)+ \mathbb{C}\chi_a$ of $\ell^\infty (S_a)$. By separable injectivity of $V$,
the embedding $\phi: C_0(S_a)\rightarrow V(a) \subset V$
 admits a norm preserving extension
$\Phi :C_0(S_a) + \mathbb{C}\chi_a \rightarrow V$. Let $$v_a = \Phi(\chi_a)\in V.$$

To complete the proof, we show that $\rho(v_a) =1$ for each $\rho \in K(e)$.

We first observe that the sequence
$(r_n)$ of odd roots of the identity function $\iota_a$ in $C_0(S_a)$ converges
pointwise to the function $\chi_a$. Let $u_n = \phi(r_n)\in V(a)$.

Let $\rho \in K(e)$. Then the map $\rho \circ \phi : C_0(S_a) \rightarrow \mathbb{C}$
is a pure normal state of $C_0(S_a)^{**}$. Hence
we have $\rho (u_n) = \rho\circ \phi (r_n) \in [0,1]$ and
$\lim_n \rho(u_n)=1$.

The norm preserving extension $\Phi$ satisfies  $\|\Phi(\chi_a)\| \leq 1$ and
$$\|\Phi(\chi_a) - 2u_n\| = \|\Phi(\chi_a) - 2\phi(r_n)\|
=\|\Phi(\chi_a) - 2\Phi (r_n)\| \leq \|\chi_a- 2r_n\| \leq 1.$$
It follows that
 $|\rho(\Phi(\chi_a)) - 2\rho(u_n)| \leq 1$ for all $n$, which implies
$$|\rho(\Phi(\chi_a)) - 2|\leq 1. $$ Now $|\rho(\Phi(\chi_a))| \leq 1$ gives $\rho(v_a)=\rho(\Phi(\chi_a))=1$.
\end{proof}

Our next task is to show that a separably injective $C_\sigma$-space $V$  is actually linearly
isometric to an abelian C*-algebra. We adopt the following strategy. Since $V$ is abelian,
it has been noted in Section \ref{sect1} that  there is a composition series $(J_\lambda)_{0\leq \lambda \leq \mu}$
of closed triple ideals in $V$ such that for each ordinal $\lambda < \mu$, the
quotient $J_{\lambda +1}/J_\lambda$ is linearly isometric to
the C*-algebra $C_0(X_\lambda)$ of continuous functions vanishing at infinity on a locally compact
Hausdorff space $X_\lambda$, and $V^{**}$ is linearly isometric to the $\ell^\infty$-sum
$\bigoplus^{\ell^\infty}_{\lambda <\mu} (J_{\lambda +1}/J_\lambda)^{**} = \bigoplus^{\ell^\infty}_{\lambda <\mu} C_0(X_\lambda)^{**}.$
By the uniqueness of predual,
$V^*$ is linearly isometric to the $\ell^1$-sum
\[\bigoplus^{\ell^1}_{\lambda <\mu} (J_{\lambda +1}/J_\lambda)^* = \bigoplus^{\ell^1}_{\lambda <\mu} C_0(X_\lambda)^*.\]
Given that $V$ is separably injective, we will refine this construction
to show that $V$ is isometric to the abelian C*-algebra $\displaystyle\bigoplus^{c_0}_{\lambda< \mu} C_0( X_{\lambda})$.

Let $V$ be separably injective and let $a\in V$ be of unit norm. Consider the closed
subtriple $V(a)$ generated by $a$ in $V$ as well as the closed triple ideal $J_a$, the latter is linearly isometric
to the C*-algebra $C_0(X_a)$ as shown before, where $X_a=\{\rho\in K: \rho(e_a)=1\}$ is weak*
locally compact by Lemma \ref{ja} and Remark \ref{xake}, and $e_a$ is the identity of the von Neumann algebra $J_a^{**}$. By separable injectivity and Lemma \ref{2},
there exists $v_a\in V$ such that $\rho(v_a)=1$ for each $\rho \in X_a$.

 If $J_a= V$, then we are done. Otherwise we have the $\ell^\infty$-sum \[V^{**}=J_a^{**}\oplus_\infty
 (J_a^{**})^\bo\] where $(J_a^{**})^\bo$ is a nonzero weak* closed triple ideal in $V^{**}$, orthogonal to
 $J_a^{**}$, that is, $J_a^{**} \bo (J_a^{**})^\bo = \{0\}$ (cf.\,\cite[Lemma 3.3.16]{book}). The quotient map
$V^{**} \rightarrow V^{**}/J_a^{**}$ identifies $(J_a^{**})^\bo$ with the quotient $V^{**}/J_a^{**}$ and we can write
\begin{equation}\label{33}
V^{**} = J_a^{**} \oplus_\infty (V^{**}/J_a^{**}) = C_0(X_a)^{**}\oplus_\infty (V/J_a)^{**}.
\end{equation}

We have the $\ell^1$-sum  $V^{*} = C_0(X_a)^{*}\oplus_1 (V/J_a)^{*}$ where $J_a$ is an M-ideal in $V$.
 Consider the quotient map
\[x\in V \mapsto [x]:= x +J_a \in [V]:= V/J_a \]
which maps the closed unit ball of $V$ onto the closed unit ball of $V/J_a$ (cf.\,\cite[Corollary 5.6]{ae}).

Pick $[b] = b+J_a \in [V]$ with unit norm. Let $V([b])$ and $J_{[b]}$ be respectively the
closed subtriple and triple ideal generated by $[b]$ in $[V]$. We can repeat the previous arguments in the setting
$V([b]) \subset J_{[b]} \subset [V]$ to deduce that $J_{[b]}$ is linearly isometric to some abelian
C*-algebra $C_0(X_{[b]})$ with
\[X_{[b]} =\{\theta \in{\rm ext}\, [V]^*_1: \theta (e_{[b]})=1\}\]
where $X_{[b]}$ is locally compact in the weak* topology of $[V]^*$ by Lemma \ref{ja} and
$e_{[b]}$ is the identity of  $J_{[b]}^{**}\subset [V]^{**} =V^{**}/J_a^{**}$,  which identifies
with a tripotent $\widetilde e_b \in (J_a^{**})^\bo$ with $e_{[b]}= \widetilde e_b + J_{a}^{**}$.
Moreover, the quotient JB*-triple $[V]= V/J_a$ is abelian and seprably injective
by \cite[Proposition 4.6]{AC13}, and hence Lemma \ref{2} implies that
there exists $v_{[b]} =\widetilde v_b + J_a\in [V]= V/J_a$ such that $\theta (v_{[b]})=1$ for each
$\theta \in X_{[b]}$.

The set ${\rm ext}\, [V]^*_1$ consists of nonzero complex-valued triple homomorphisms on $[V]$,
 which can be lifted to nonzero complex triple homomorphisms on $V$ via the quotient map and we have
\[{\rm ext}\, [V]^*_1=\{\bar \rho: \rho \in K, \rho(J_a)=\{0\}\}\]
where $\bar \rho (x + J_a):= \rho(x)$.
Hence we have
\[X_{[b]}=\{ \bar\rho :
\rho \in K, \rho(a)=0, \rho(\widetilde e_b)=1\} \]
and $\bar \rho \in X_{[b]}$ implies $\rho(b) = \bar \rho([b]) \neq 0$ and  $\rho(\widetilde v_b)=1$.

A weak* convergent net in $[V]^*$ lifts to a weak* convergent net in $V^*$ via the quotient map.
Considering $X_{[b]}\subset C_0(X_{[b]})^*= J_{[b]}^*$ and  Lemma \ref{ja}, we see that
a net $(\bar\rho_\gamma)$ in $X_{[b]}$ converges to $\bar\rho\in X_{[b]}$
in the weak* topology of $C_0(X_{[b]})^*$ if and only if
 $(\rho_\gamma)$ weak* converges to $\rho$ in $K$. The homeomorphism \[\bar\rho \in X_{[b]}\mapsto \rho \in
 \{\rho \in K: \rho(a)=0, \rho(\widetilde e_b)=1\}\] enables us to identify these two spaces. We note that
\begin{equation}\label{10}
\mathbb{T}X_a \cap \mathbb{T}X_{[b]} \subset \mathbb{T}X_a \cap {\rm ext}\,[V]^*_1 = \emptyset
\end{equation}
where $\rho(a) \neq 0$ for all $\rho \in X_a$.

 In the $\ell^1$-sum
 \begin{equation}\label{11}
 V^* =J_a^* \oplus_1 (V/J_a)^*= C(X_a)^{*}\oplus_1 [V]^{*},
\end{equation}
  each $\omega \in V^*$ admits a decomposition
$\omega = \omega^1 + \omega^2$ in $V^*$ with $\omega ^2(J_a)=\{0\}$ and $\|\omega\|= \|\omega^1|_{J_a}\|+\|\omega^2\|$,
 which provides the identification of  $\omega$ as an element $(\widetilde \omega^1, \widetilde\omega^2)$ in the $\ell^1$-sum,
 defined by
\[\widetilde\omega^1 = \omega^1|_{J_a} \in J_a^* \quad {\rm and} \quad \widetilde\omega^2( [\cdot])
= \omega^2 (\cdot)
\in (V/J_a)^*.\]
Hence for an extreme point $\omega \in {\rm ext}\, V^*_1 =K$, we have $\widetilde\omega^1=0$
or $\widetilde\omega^2=0$.

Given a net $(\omega_\gamma)$ in $V^*$ weak* converging to a limit $\omega\in V^*$, it can be seen that
the net $(\widetilde\omega^1_\gamma)$ converges to $\widetilde \omega^1$ in the $w(J_a^*,J_a)$-topology
and the net $(\widetilde\omega^2_\gamma)$ converges to $\widetilde \omega^2$ in the
weak* topology of $(V/J_a)^*$. In particular, if the net $(\omega_\gamma)$ is in $K$
and $\omega \in K$ with $\widetilde\omega^j \neq 0$  $(j \in \{1,2\})$,
then the convergence of $(\widetilde\omega_\gamma^j)$ to $\widetilde\omega^j$ implies that $\widetilde
\omega_\gamma^j \neq 0$ eventually, and
hence  $\widetilde\omega_\gamma^{j'}=0$ for $j'\neq j$ eventually.

The closed unit ball of the $\ell^1$-sum in (\ref{11}) has extreme points
$$(\mathbb{T}X_a, 0) \cup (0, {\rm ext}\,[V]^*_1):= \{(\omega,0): \omega \in \mathbb{T}X_a\} \cup
\{(0,\omega): \omega \in {\rm ext}\,[V]^*_1 \}$$
and in the identification of (\ref{10}),   we have the disjoint union $K= \mathbb{T}X_a \cup {\rm ext}\,[V]^*_1$.
Given a net $(\omega_\gamma)$ in $K$ weak* converging to some $\omega \in K$,
and given either  $\omega\in \mathbb{T}X_a$ or $\omega \in {\rm ext}\,[V]^*_1$,  the above observation implies
 that  $\omega_\gamma$ belongs to the same set eventually.

Observe that $[J_b] = J_{[b]}=(J_b+J_a)/J_a$ and if $J_{[b]} \neq [V]$, we have the $\ell^\infty$-sum
\[ V^{**} = J_a^{**} \oplus_\infty  ((J_b+J_a)/J_a)^{**}\oplus_\infty ([V]/J_{[b]})^{**}
= C_0(X_a)^{**} \oplus_\infty  C_0(X_{[b]})^{**}\oplus_\infty ([V]/J_{[b]})^{**}\]
where the quotient JB*-triple $[[V]]:=[V]/J_{[b]}$ is separably injective, $\rho (v_a)=1$ for $\rho \in X_a$
and $\rho(\widetilde v_b)=1$ for $\rho \in X_{[b]}$.

The closed unit ball of the $\ell^1$-sum
\[
 V^* =  C_0(X_a)^* \oplus_1  C_0(X_{[b]})^{*}\oplus_1 ([V]/J_{[b]})^*
\]
has extreme points
\[(\mathbb{T}X_a,0,0) \cup (0,\mathbb{T}X_{[b]},0) \cup (0,0,  {\rm ext}\, [[V]]^*_1) \]
and we have the disjoint union $K= {\rm ext}\, V^*_1 = \mathbb{T}X_a \cup \mathbb{T}X_{[b]} \cup  {\rm ext}\, [[V]]^*_1$.
Given a net $(\omega_\gamma)$ in $K$ weak* converging to some $\omega \in K$,
if $\omega$ belongs to one  of the three sets above, then repeating the arguments as before,  $\omega_\gamma$ belongs to the same set eventually.

Now transfinite induction together with separable injectivity yields a composition series
$(J_\lambda)_{0\leq \lambda \leq \mu}$ of closed triple ideals in $V$, with $ v_\lambda \in V$, such that $V^{*}$ is linearly isometric to, and identifies with, the $\ell^1$-sum
\[\bigoplus^{\ell^1}_{\lambda<\mu} (J_{\lambda+1}/J_\lambda)^{*} = \bigoplus^{\ell^1}_{\lambda<\mu} C_0( X_{\lambda})^{*}\]
where $\rho( v_\lambda)=1$ for each $\rho \in  X_{\lambda}$, and the pure normal state space $X_\lambda$ of
$(J_{\lambda+1}/J_\lambda)^{**}$ identifies with the set
\[\{\rho \in K: \rho(J_\lambda)=\{0\}, \rho (\widetilde e_\lambda)=1\}\]
in which $\widetilde e_\lambda$ is the identity of $(J_{\lambda+1}/J_\lambda)^{**}$.
 In this identification, we have the disjoint union
\[ K= \bigcup_{\lambda<\mu} \mathbb{T}X_\lambda\]
and for a weak* convergent net $(\omega_\gamma)$ in $K$  with limit $\omega \in \mathbb{T}X_\lambda$ for
some $\lambda$, we have $(\omega_\gamma)$ in $\mathbb{T}X_\lambda$ eventually. As a consequence of Lemma \ref{ja}, in the identification $X_\lambda \subset C_0(X_\lambda)^*$ and
$X_\lambda \subset K$, the weak* convergence in $ C_0(X_\lambda)^*$ of a net $(\rho_\gamma)$  in $X_\lambda  $
to $\rho \in X_\lambda$ is the same as the weak* convergence in $K$.

\begin{lem}\label{x} Given that $V$ is separably injective and in the above notation, the subset $X_\lambda \cup\{0\}$
of  $K \cup\{0\}$ is weak* compact  for all $\lambda <\mu$ and also, $\mathbb{T}X_\lambda$ is relatively weak* open
in $K\cup \{0\}$.
\end{lem}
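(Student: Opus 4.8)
The plan is to derive both assertions from the two facts already assembled before the statement: that $K\cup\{0\}$ is weak* compact with $K=\bigcup_{\lambda<\mu}\mathbb{T}X_\lambda$ a disjoint union, and that a net in $K$ weak* converging to a point of $\mathbb{T}X_\lambda$ lies in $\mathbb{T}X_\lambda$ eventually. Separable injectivity enters only through the element $v_\lambda\in V$ furnished by Lemma \ref{2}, which takes the value $1$ on all of $X_\lambda$; its role is to replace the functional $\widetilde e_\lambda$, which is \emph{not} weak* continuous on $V^*$ since it lives in $V^{**}$, by a genuinely weak* continuous pairing with an element of $V$.

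For the compactness of $X_\lambda\cup\{0\}$ I would show it is weak* closed in the compact set $K\cup\{0\}$. Let $(\rho_\gamma)$ be a net in $X_\lambda\cup\{0\}$ with weak* limit $\omega\in K\cup\{0\}$. If $\omega=0$ there is nothing to prove, so assume $\omega\neq 0$, that is $\omega\in K$. Choosing $x\in V$ with $\omega(x)\neq 0$ forces $\rho_\gamma(x)\to\omega(x)\neq 0$, so $\rho_\gamma\neq 0$ eventually, whence $\rho_\gamma\in X_\lambda$ eventually and we may pass to this tail, a net lying in $K$. Writing $\omega\in\mathbb{T}X_{\lambda'}$ for the unique index supplied by the disjoint union and applying the eventual-membership property with index $\lambda'$, the tail lies in $\mathbb{T}X_{\lambda'}$; since it also lies in $X_\lambda\subset\mathbb{T}X_\lambda$ and the sets $\mathbb{T}X_{\lambda'}$ are pairwise disjoint, we get $\lambda'=\lambda$ and $\omega\in\mathbb{T}X_\lambda$. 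Finally, because $v_\lambda\in V$ the evaluation at $v_\lambda$ is weak* continuous, so $\omega(v_\lambda)=\lim_\gamma\rho_\gamma(v_\lambda)=1$; writing $\omega=\alpha\rho$ with $\alpha\in\mathbb{T}$ and $\rho\in X_\lambda$ gives $\alpha=\alpha\rho(v_\lambda)=\omega(v_\lambda)=1$, hence $\omega=\rho\in X_\lambda$. Thus $X_\lambda\cup\{0\}$ is closed in $K\cup\{0\}$, and therefore weak* compact.

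For the openness of $\mathbb{T}X_\lambda$ in $K\cup\{0\}$ I would first read off from the eventual-membership property that $\mathbb{T}X_\lambda$ is relatively weak* open in $K$: a net in $K\setminus\mathbb{T}X_\lambda$ cannot converge to a point of $\mathbb{T}X_\lambda$, so $K\setminus\mathbb{T}X_\lambda$ is closed in $K$. To upgrade this to $K\cup\{0\}$, pick a weak* open $W\subset V^*$ with $\mathbb{T}X_\lambda=W\cap K$ and replace $W$ by $W\setminus\{0\}$, which is still weak* open since the weak* topology is Hausdorff; as $0\notin K$ one checks $(W\setminus\{0\})\cap(K\cup\{0\})=\mathbb{T}X_\lambda$. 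Equivalently, any net in $K\cup\{0\}$ converging to a nonzero point is eventually in $K$, so openness of $\mathbb{T}X_\lambda$ in $K$, a set consisting only of nonzero functionals, already yields openness in $K\cup\{0\}$.

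The only genuine obstacle is the final step of the compactness argument: weak* convergence $\rho_\gamma\to\omega$ controls the values of $\omega$ only on elements of $V$, and $\widetilde e_\lambda\in V^{**}$ is not such an element, so one cannot conclude $\omega(\widetilde e_\lambda)=1$ directly and thereby place $\omega$ in $X_\lambda$ rather than merely in $\mathbb{T}X_\lambda$. Overcoming this is exactly the purpose of $v_\lambda\in V$, and this is the point at which separable injectivity, through Lemma \ref{2}, becomes indispensable.
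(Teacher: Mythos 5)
Your proposal is correct and follows essentially the same route as the paper: both arguments rest on the eventual-membership property of weak* convergent nets in $K=\bigcup_{\lambda<\mu}\mathbb{T}X_\lambda$ coming from the $\ell^1$-decomposition of $V^*$, and both use the element $v_\lambda$ from Lemma \ref{2} to force the phase $\alpha=1$ and conclude that the limit lies in $X_\lambda$ itself rather than merely in $\mathbb{T}X_\lambda$. Your treatment of the nuisance cases (terms of the net equal to $0$, passing from openness in $K$ to openness in $K\cup\{0\}$) is slightly more explicit than the paper's, but the substance is identical.
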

\begin{proof} Let $(\rho_\gamma)$ be a net in $X_\lambda$ weak* converging to a nonzero limit
$\omega \in V^*$. Then $\omega \in K$ and by the above remark, we must have $\omega \in\mathbb{T}X_\lambda$,
say $\omega = \alpha \rho$ with $\alpha \in \mathbb{T}$ and $\rho\in X_\lambda$. Since $X_\lambda$ is contained
in the weak* compact set $\{\rho'\in K: \rho'(v_\lambda)=1\}$, it follows that
$\alpha = \alpha \rho(v_\lambda) = \lim_\gamma \rho_\gamma(v_\lambda) =1$ and $\omega = \rho \in X_\lambda$.
This proves that $X_\lambda \cup \{0\}$ is weak* closed in $K\cup\{0\}$ and hence weak* compact.

For the second assertion, let $(\omega_\gamma)$ be a net in $(K\cup\{0\})\backslash \mathbb{T}X_\lambda$ weak* converging to some $\omega \in K$. Then again $\omega \notin \mathbb{T}X_\lambda$ for otherwise, the previous remark implies that
$\omega_\gamma$ belongs to $\mathbb{T}X_\lambda$
eventually which is impossible.
\end{proof}

The above construction enables us to show that a separably injective $C_\sigma$-space is isometric to
an abelian C*-algebra.

\begin{thrm}\label{4} Let $V$ be a separably injective $C_\sigma$-space.
Then $V$ is linearly isometric to an abelian C*-algebra.
\end{thrm}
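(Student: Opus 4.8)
The plan is to exhibit an explicit surjective linear isometry from $V$ onto the abelian C*-algebra $\bigoplus^{c_0}_{\lambda<\mu} C_0(X_\lambda) = C_0(\bigsqcup_{\lambda<\mu} X_\lambda)$, assembling the data produced by the construction just preceding the statement. I would first record the structural facts I intend to use: the disjoint union $K = \bigcup_{\lambda<\mu}\mathbb{T}X_\lambda$, the elements $v_\lambda\in V$ with $\rho(v_\lambda)=1$ for all $\rho\in X_\lambda$, and Lemma~\ref{x}, by which each $X_\lambda\cup\{0\}$ is weak* compact and each $\mathbb{T}X_\lambda$ is relatively weak* open, hence (forming a disjoint cover) relatively weak* clopen, in $K\cup\{0\}$. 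The relation $\rho(v_\lambda)=1$ also shows that $X_\lambda$ meets each orbit $\mathbb{T}\rho$ $(\rho\in X_\lambda)$ in a single point, so $\bigsqcup_\lambda X_\lambda$ is a cross-section of the $\mathbb{T}$-action on $K$.

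Next I would define $\Psi\colon V \to \prod_{\lambda<\mu}C_0(X_\lambda)$ by $\Psi(v)=(\widehat v|_{X_\lambda})_{\lambda<\mu}$, which is nothing but the canonical embedding $V\hookrightarrow V^{**}=\bigoplus^{\ell^\infty}_{\lambda<\mu}C_0(X_\lambda)^{**}$ read off coordinatewise. Each $\widehat v|_{X_\lambda}$ lies in $C_0(X_\lambda)$, since $\widehat v$ is weak* continuous, $X_\lambda\cup\{0\}$ is compact, and $\widehat v(0)=0$, so $\widehat v$ restricts to a continuous function on $X_\lambda$ vanishing at the point $0$ at infinity. Because $|\widehat v(\alpha\rho)|=|\widehat v(\rho)|$ we get $\sup_{\mathbb{T}X_\lambda}|\widehat v| = \|\widehat v|_{X_\lambda}\|_\infty$, and hence $\|v\| = \sup_\lambda\|\widehat v|_{X_\lambda}\|_\infty$; thus $\Psi$ is isometric once we know its image lies in the $c_0$-sum. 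This decay is the heart of the matter and the step I expect to be the \emph{main obstacle}: I must show that for each $\varepsilon>0$ only finitely many $\lambda$ satisfy $\sup_{X_\lambda}|\widehat v|\ge\varepsilon$. I would argue by contradiction. If infinitely many distinct $\lambda_n$ carried points $\rho_n\in X_{\lambda_n}$ with $|\widehat v(\rho_n)|\ge\varepsilon$, then all $\rho_n$ lie in the weak* compact set $\{|\widehat v|\ge\varepsilon\}\subset K$ (compact since $v\in C_0(K)$), so a subnet converges to some $\omega\in K$, say $\omega\in\mathbb{T}X_{\lambda_0}$. The eventual-membership property then forces that subnet into $\mathbb{T}X_{\lambda_0}$, contradicting the disjointness of the sets $\mathbb{T}X_{\lambda_n}$. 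This is precisely where separable injectivity enters, through the clopen decoupling supplied by Lemma~\ref{x}.

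It remains to prove surjectivity. Given $(g_\lambda)_\lambda\in\bigoplus^{c_0}_{\lambda<\mu}C_0(X_\lambda)$, I would define $f$ on $K$ by $f(\alpha\rho)=\alpha\,g_\lambda(\rho)$ for $\rho\in X_\lambda$ and $\alpha\in\mathbb{T}$; this is well defined by the cross-section property and $\mathbb{T}$-equivariant by construction. Continuity I would verify piece by piece: if $\omega_\gamma\to\omega=\alpha\rho$ with $\rho\in X_\lambda$, then the net is eventually in the clopen set $\mathbb{T}X_\lambda$, say $\omega_\gamma=\alpha_\gamma\rho_\gamma$; testing against $v_\lambda$ gives $\alpha_\gamma=\omega_\gamma(v_\lambda)\to\omega(v_\lambda)=\alpha$, whence $\rho_\gamma=\overline{\alpha_\gamma}\,\omega_\gamma\to\overline{\alpha}\,\omega=\rho$ by joint weak* continuity of the scalar action, and continuity of $g_\lambda$ yields $f(\omega_\gamma)=\alpha_\gamma g_\lambda(\rho_\gamma)\to\alpha g_\lambda(\rho)=f(\omega)$. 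For vanishing at infinity, only finitely many $\lambda$ have $\|g_\lambda\|_\infty\ge\varepsilon$ and each $\{|g_\lambda|\ge\varepsilon\}$ is compact in $X_\lambda$, so $\{|f|\ge\varepsilon\}$ is the $\mathbb{T}$-saturation of a finite union of compacta and therefore compact. Thus $f\in C_0(K)$, and being $\mathbb{T}$-equivariant, $f\in C_\sigma(K)=V$ with $\Psi(f)=(g_\lambda)_\lambda$. Hence $\Psi$ is a surjective linear isometry and $V$ is linearly isometric to the abelian C*-algebra $\bigoplus^{c_0}_{\lambda<\mu}C_0(X_\lambda)$.
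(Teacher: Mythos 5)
Your proposal is correct and follows essentially the same route as the paper: identify $V$ with $C_\sigma(K)$, use the disjoint decomposition $K=\bigcup_{\lambda<\mu}\mathbb{T}X_\lambda$ together with the elements $v_\lambda$ and Lemma~\ref{x}, and show that restriction to the cross-section $\bigsqcup_\lambda X_\lambda$ is a surjective linear isometry onto $\bigoplus^{c_0}_{\lambda<\mu}C_0(X_\lambda)$. The only cosmetic difference is in the $c_0$-decay step, where you argue by extracting a convergent subnet from the compact set $\{|\widehat v|\ge\varepsilon\}$ and invoking the eventual-membership property, while the paper extracts a finite subcover of $K\cup\{0\}$ from the open cover by the sets $\mathbb{T}X_\lambda$ and $U_\varepsilon$; these are equivalent uses of the same compactness.
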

\begin{proof}
Let  $K={\rm ext}\,V^*_1$
and as shown previously, we have the $\ell^1$-sum
\[ V^* =\bigoplus^{\ell^1}_{\lambda<\mu} C_0( X_{\lambda})^{*}\]
with the disjoint union $ K= \bigcup_{\lambda<\mu} \mathbb{T}X_\lambda$.
For each $\lambda <\mu$, there is an element $v_\lambda \in V$ such that $\rho(v_\lambda)=1$ for
all $\rho \in X_\lambda$.
We show that $V$ is linearly isometric
to the $c_0$-sum $\displaystyle\bigoplus^{c_0}_{\lambda< \mu} C_0( X_{\lambda})$ which would complete the proof.

We continue to identify $V$ with $C_\sigma (K)$ in (\ref{id}). By Lemma \ref{x}, each $f\in C_\sigma(K)$
 restricts to  a  continuous function $f|_{X_\lambda} \in C_0(X_\lambda)$.

We show that the map
$$f\in V \approx C_\sigma(K) \mapsto (f|_{X_\lambda}) \in
\bigoplus^{c_0}_{\lambda< \mu} C_0( X_{\lambda})$$ is a surjective linear isometry.

To see that $(f|_{X_\lambda})$ indeed belongs to the $c_0$-sum, we
need to show $(\|f|_{X_\lambda}\|) \in c_0(\Lambda)$, where $\Lambda=[0,\mu)$. Let
$\varepsilon >0$. By Lemma \ref{x},  for each $\lambda \in \Lambda$,
the set
$\mathbb{T}X_\lambda$ is relatively weak*
open in $K\cup\{0\}$.
Since $f$ vanishes at infinity on $K$, the set
$$U_\varepsilon = \{\omega \in K : |f(\omega)| < \varepsilon\}\cup \{0\}$$
is a relatively weak* open neighbourhood of $0$ in the one-point
compactification $K\cup\{0\}$ of $K$.  We have
$$K\cup\{0\} = \bigcup_{\lambda \in \Lambda} \mathbb{T}X_\lambda \cup
U_\varepsilon$$ and by weak* compactness, there are finitely many
$\lambda_1, \ldots, \lambda_n$ such that
$$K\cup\{0\} \subset \mathbb{T}X_{\lambda_1} \cup \cdots  \cup \mathbb{T}X_{\lambda_n} \cup
U_\varepsilon.$$ It follows that $\|f|_{X_\lambda}\| \leq
\varepsilon$ for $\lambda \notin \{\lambda_1, \ldots, \lambda_n\}$
which proves $(\|f|_{X_\lambda}\|) \in c_0(\Lambda)$.

 Since $K
=\displaystyle \cup_\lambda \mathbb{T}\,X_\lambda$ and each $f\in
C_\sigma (K)$ satisfies $f(\alpha \omega) = \alpha f(\omega)$ for $\alpha
\in \mathbb{T}$ and $\omega\in K$, it is evident that the map is a
linear isometry.

It remains to show that the map is surjective.
Let $(g_\lambda) \in \displaystyle\bigoplus^{c_0}_{\lambda \in \Lambda} C_0( X_{\lambda})$. Define a function $f: K \rightarrow \mathbb{C}$
by
$$f(\omega) = \alpha g_\lambda (\rho_\lambda) \quad {\rm for}\quad
\omega = \alpha \rho_\lambda \in \mathbb{T} X_\lambda.$$ The function
$f$ is well-defined since the sets
$\{\mathbb{T}X_\lambda\}_\lambda$ are mutually disjoint and each
$\omega\in K$ has a unique representation $\omega = \alpha \rho\in \mathbb{T}X_\lambda$ for
if $\alpha\rho =\beta \sigma \in \mathbb{T}X_\lambda $, we have $\alpha = \alpha\rho(v_\lambda)
=\beta\sigma(v_\lambda)= \beta$, where $X_\lambda$ is contained in the weak* compact
 set $\{\rho'\in K: \rho'(v_\lambda)=1\}$.

We complete the proof by showing $f\in C_\sigma (K)$. We have
readily  $f(\alpha \omega) = \alpha f(\omega)$ for $\alpha \in
\mathbb{T}$ and $\omega\in K$.

For continuity, let $(\omega_\gamma)$ be a net weak* converging to
$\omega \in K$ and say,   $\omega = \alpha \rho\in
\mathbb{T}X_{\lambda}$ for some $\lambda$.  By a previous remark, the net $(\omega_\gamma)$
is in $\mathbb{T}X_{\lambda}$ eventually. Therefore we have $\omega_\gamma
  = \alpha_\gamma \rho_\gamma$ with $\alpha_\gamma \in \mathbb{T}$ and $\rho_\gamma \in X_{\lambda}$  eventually.
  It follows that eventually $\alpha_\gamma = \alpha_\gamma \rho_\gamma (v_\lambda) \rightarrow
\alpha\rho(v_\lambda) = \alpha$  and $\rho_\gamma \rightarrow \rho$. Hence
 we have
\[\lim_\gamma f(\omega_\gamma)= \lim_\gamma \alpha_\gamma
g_\lambda(\rho_\gamma) = \alpha g_\lambda(\rho) = f(\omega).\]

Finally, for any $\varepsilon >0$, there are finitely many $\lambda_1, \ldots, \lambda_n$ such that
$\|g_\lambda\| < \varepsilon$ for $\lambda \notin \{\lambda_1, \ldots, \lambda_n\}$.
For each $\lambda_j$ with $j=1, \ldots,n$, there is a weak* compact set $E_j \subset X_{\lambda_j}$
such that $\{\rho \in X_{\lambda_j}: |g_{\lambda_j}(\rho)| \geq \varepsilon\} \subset E_j $.
This gives
\begin{eqnarray*}
\{\omega \in K: |f(\omega)| \geq \varepsilon\}= \cup_{j=1}^n \mathbb{T}\{
\rho\in X_{\lambda_j}:  |g_{\lambda_j}(\rho)| \geq \varepsilon\}
  \subset \cup_{j=1
}^n \mathbb{T}E_j \subset K
\end{eqnarray*}
where the finite union $ \cup_j \mathbb{T}E_j $
is weak* compact  and therefore $f \in C_0(K)$.
\end{proof}

Finally, by the characterisation of separably injective abelian C*-algebras in \cite[Theorem 3.5]{CL16},
together with Theorem \ref{4}, we conclude with the following main result of the paper.
\newpage
\begin{thrm}\label{111} Let $V$ be a $C_\sigma$-space.
The following conditions are equivalent.
\begin{itemize}
\item[(i)] $V$ is separably injective. \item[(ii)] $V$ is linearly
isometric to the Banach space $C_0(S)$ of complex continuous
functions vanishing at infinity on a substonean locally compact
Hausdorff space $S$.
\end{itemize}
\end{thrm}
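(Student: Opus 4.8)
The plan is to obtain this characterisation by assembling two facts that are already available: Theorem \ref{4}, which reduces a separably injective $C_\sigma$-space to an abelian C*-algebra, and the description of separably injective abelian C*-algebras recalled in Section \ref{sect1} (namely \cite[Theorem 3.5]{CL16}). The argument therefore splits into the two implications, and throughout I would rely on the elementary but indispensable observation that \emph{separable injectivity is an invariant of surjective linear isometry}: if $V$ is linearly isometric to $W$, then $V$ is separably injective exactly when $W$ is, since such an isometry merely relabels the extension problem in the definition of separable injectivity.

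For the implication (i) $\Rightarrow$ (ii), I would start from a separably injective $C_\sigma$-space $V$ and invoke Theorem \ref{4} to produce a surjective linear isometry of $V$ onto an abelian C*-algebra. By the Gelfand--Naimark representation, every abelian C*-algebra has the form $C_0(S)$ for some locally compact Hausdorff space $S$, so $V$ is linearly isometric to $C_0(S)$. The isometric invariance noted above then forces $C_0(S)$ to be separably injective, and \cite[Theorem 3.5]{CL16} yields that $S$ is substonean, which is precisely (ii).

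For the converse (ii) $\Rightarrow$ (i), I would assume $V$ is linearly isometric to $C_0(S)$ with $S$ substonean. By \cite[Theorem 3.5]{CL16}, the hypothesis that $S$ is substonean gives that $C_0(S)$ is separably injective, and isometric invariance transfers separable injectivity back to $V$. This closes the equivalence.

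I expect no genuine obstacle at this stage, precisely because all the mathematical weight has already been carried by Theorem \ref{4} --- the collapse of the Jordan-triple structure of a separably injective $C_\sigma$-space onto that of an associative abelian C*-algebra --- and by the cited characterisation of separably injective abelian C*-algebras. The only points demanding any care are the two routine inputs, the isometric invariance of separable injectivity and the Gelfand identification of abelian C*-algebras with spaces $C_0(S)$, after which the proof is a matter of combining Theorem \ref{4} with \cite[Theorem 3.5]{CL16}.
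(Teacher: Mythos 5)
Your proposal is correct and follows exactly the paper's own route: the paper derives Theorem \ref{111} by combining Theorem \ref{4} with the characterisation of separably injective abelian C*-algebras in \cite[Theorem 3.5]{CL16}, using precisely the isometric invariance of separable injectivity and the Gelfand representation that you spell out. No further comment is needed.
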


\end{document}